\documentclass[a4paper,12pt]{amsart}

\usepackage{epsfig}
\usepackage{amsthm,amsfonts}
\usepackage{amssymb,graphicx,color}
\usepackage[all]{xy}

\setlength{\topmargin}{-10mm}
\setlength{\textwidth}{16cm}
\setlength{\textheight}{248mm}
\setlength{\oddsidemargin}{0cm}
\setlength{\evensidemargin}{0mm}

\newtheorem{theorem}{Theorem}[section]
\newtheorem{lemma}[theorem]{Lemma}

\newtheorem{definition}[theorem]{Definition}
\newtheorem{example}[theorem]{Example}
\newtheorem{remark}[theorem]{Remark}

\begin{document}

\title[Lipschitz contact equivalence and real analytic functions .
 ]{Lipschitz contact equivalence and real analytic functions .
}

\author[]{Lev Birbrair*}\thanks{*Research supported under CNPq 302655/2014-0 grant and by Capes-Cofecub}
\address{Departamento de Matem\'atica, Universidade Federal do Cear\'a
(UFC), Campus do Picici, Bloco 914, Cep. 60455-760. Fortaleza-Ce,
Brasil} \email{birb@ufc.br}

\author[]{Rodrigo Mendes}
\address{Instituto de ci\^encias exatas e da natureza, Universidade de Integra\c{c}\~ao Internacional da Lusofonia Afro-Brasileira (unilab)
, Campus dos Palmares, Cep. 62785-000. Acarape-Ce,
Brasil} \email{rodrigomendes@unilab.edu.br}

\keywords{width}
\subjclass[2010]{14B05; 32S50 }
%\thanks{The authors were partially supported by CNPq-Brazil}

\begin{abstract}
We study the properties of the a complete invariant of the analytic function of two variables with respect to the Lipschitz contact equivalence. This invariant is called pizza. We prove that the pizza of real analytic functions has some continuity properties.
\end{abstract}

\maketitle

\section{Introduction}

The paper is devoted to Lipschitz geometry of real analytic functions. In \cite{BFTG} the authors proved that for analytic functions of two variables the notions of Lipschitz $\mathcal{K}$- equivalence and blow-analytic equivalence coincide. The paper \cite{BFGG} is devoted to classification of germs of subanalytic functions of $2$ variables up to Lipschitz $\mathcal{K}$-equivalence. The invariant defined in \cite{BFGG} is called the pizza. Pizza is a partition of the germ of $\mathbb{R}^2$ at zero into a finite sequence of germs of curvilinear triangles. On each triangle (a piece of pizza) one can define a width function. Each width function is an affine function defined on a segment of $\mathbb{Q}$. The sequence of the triangles, equipped with the width functions makes a complete invariant of the germs of subanalytic functions with respect to the Lipschitz $\mathcal{K}$-equivalence. 

Here we consider pizzas of germs of analytic functions. We show that such pizzas of analytic functions have special properties. Namely, we prove that the width function is continuous and that the slope of the corresponding affine function is always positive. Notice, that these properties do not hold for the general case.

We use the language of zones, proposed to us by Vincent Grandjean and Andrei Gabrielov. Zones are some special subsets of the set of arcs, parametrized by the distance to the singular point (so called Valette's link (see \cite{V})).
Namely, a zone is a subset of Valette's link, such that for any two arcs $\gamma_1, \gamma_2$ in the zone, all the arc, belonging to the H\"older triangle, bounded by $\gamma_1$ and $\gamma_2$ also belongs to the zone. We are interested in the zones defined by arcs such that the order of a function $f$ on all these arcs is constant. An arc, belonging to the zone Z is called generic, if it belongs to a H\"older triangle, generated by a pair of the other arcs to the zone, such that the order of contact of these two arcs and the order of contact of any one of these with the given arc is equal to the size of the zone. Our main result is that for an analytic function, any arc belonging to any zone of the constant order is generic. The properties of the pizza of an analytic function follow from this.

\section{Definitions and basic concepts}

\medskip

Consider the set of subanalytic arcs $\gamma:[0,\epsilon)\rightarrow \mathbb{R}^2$, such that $\gamma(0)=0$. Unless othewise specified, we suppose that the arcs are parametrized by distance to origin, i.e., $\|\gamma(t)\|=t$.

\begin{definition}\label{ordonarc}
\emph{Let $f:\gamma \rightarrow \mathbb{R}$ be a subanalytic function. The Newton-Puiseux expansion of the function looks as follows:
\begin{center}
$f(\gamma(t))=a_1t^{\alpha}+o(t^{\alpha}), \ \alpha \in \mathbb{Q}_+, \ a_1 \in \mathbb{R}^*$.
\end{center} 
The value $\alpha$ is called the order of the function on $\gamma$. We use the notation $ord_\gamma f$. }
\end{definition}
In the definition \ref{ordonarc} we suppose that the function $f$ is defined only on the arc $\gamma$. But this definition makes sense if the function is defined on a subanalytic set $X$, such that $\gamma \subset X$.

\begin{definition}(order of tangency)
\emph{ Let $\gamma_1$ and $\gamma_2$ be two subanalytic arcs. Define:
\begin{center}
$tord(\gamma_1,\gamma_2)=ord_\gamma \|\gamma_1(t)-\gamma_2(t)\|$.
\end{center}
 }
\end{definition}

The arcs are called tangents if $tord(\gamma_1,\gamma_2)>1$.

\begin{definition}
\emph{Two arcs $\gamma_1$ and $\gamma_2$ divide the germ of $\mathbb{R}^2$ at the origin into two components. If $\beta=tord(\gamma_1,\gamma_2)>1$ then the closure of the smaller (not containing a half-plane) component is called a \emph{$\beta$-H\"older triangle}. If $tord(\gamma_1,\gamma_2)=1$ then the closure of each of the two components is called a \emph{$1$-H\"older triangle}. The number $\beta \in \mathbb{Q}_+$ is called the \emph{exponent} of the H\"older triangle. We denote by $T(\gamma_1,\gamma_2)$ the H\"older triangle bounded by ``sides"  $\gamma_1$ and $\gamma_2$ }.
\end{definition}

\begin{definition}
\emph{A \emph{zone} in the space of arcs is a subset such that for any H\"older triangle, defined by any two arcs from the zones we have the following property: Let $\gamma$ is another arc such that the germ of $\gamma$ at the origin belongs to the H\"older triangle. Then $\gamma$ belongs to the zone. }
\end{definition}

\begin{example}

\begin{enumerate}
\item \emph{One arc is a zone. This zone are called special.}

\item  \emph{Take any H\"older triangle, then all the arcs, belonging to the interior the triangle form a zone.}

\item \emph{The following set form a zone as well: Take all the arcs $\gamma$ belong to the upper half-plane, such that $tord(\gamma,L_x)>2$, where $L_x$ is the non-negative part of $x$-axis.}
\end{enumerate}

\end{example}

Let $Z$ be a zone, we define the \emph{size of the zone} as the infimum of $tord(\gamma_1,\gamma_2)$, where the arcs $\gamma_1$ and $\gamma_2$ belong to $Z$. We denote the size of $Z$ by $\mu(Z)$.

\begin{remark}
\emph{We say that the size of a zone containing only a single arc is equal to infinity.}
\end{remark}
\begin{definition}
\emph{An arc $\gamma$ is called a finitely presented arc when $\gamma$ can be parametrized by a finite Puiseux expansion, i.e., $\gamma(x)=(x,f(x))$ (or $\gamma(y)=(g(y),y)$), where $f(x)$ (resp. $g(y)$) is a finite Puiseux series}.
\end{definition}

\begin{lemma}\label{finitely presented}
\emph{Let $Z$ be a zone such that $\mu(Z)<\infty$. Then, there exists a finitely presented arc, belonging to this zone. }
\end{lemma}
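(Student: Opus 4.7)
The plan is to exploit $\mu(Z)<\infty$ to extract two arcs of $Z$ with finite tangency order, consider the Hölder triangle they span (which by the zone property lies inside $Z$), and construct a finitely presented arc inside that triangle by truncating a common Puiseux prefix at the order where the two arcs first disagree.

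First, by the definition of the infimum there exist $\gamma_1,\gamma_2 \in Z$ with $\beta := tord(\gamma_1,\gamma_2) < \infty$. After a suitable linear change of coordinates, both arcs can be written as graphs $y=f_i(x)$ of subanalytic Puiseux series with a common bounded denominator. Since $tord(\gamma_1,\gamma_2)=\beta$, the series $f_1$ and $f_2$ agree on all exponents strictly less than $\beta$ (of which there are only finitely many, by the denominator bound) and their coefficients at $x^\beta$ differ. Thus we may write
\[
f_i(x) = P(x) + a_i x^\beta + o(x^\beta), \qquad i=1,2,
\]
with $P$ a finite Puiseux polynomial and $a_1 \neq a_2$.

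Now pick $c$ strictly between $a_1$ and $a_2$ and let $\gamma$ be the graph $y = P(x) + c x^\beta$. By definition, $\gamma$ is a finitely presented arc. For all small $x>0$, the difference $y - f_i(x)$ has leading term $(c-a_i)x^\beta$, whose sign is fixed by the choice of $c$; hence along $\gamma$ the $y$-coordinate lies strictly between $f_1(x)$ and $f_2(x)$, which places $\gamma$ in the closed region bounded by $\gamma_1$ and $\gamma_2$. When $\beta>1$ this region is the (unique) Hölder triangle $T(\gamma_1,\gamma_2)$; when $\beta=1$ it is one of the two $1$-Hölder triangles. In either case the zone property forces $\gamma \in Z$.

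The main obstacle is the coordinate setup: one must justify that after a single linear change of coordinates both arcs become graphs of subanalytic Puiseux series sharing a finite common prefix. For $\beta>1$ the two arcs have a common tangent direction, which we rotate to the $x$-axis; for $\beta=1$ we instead pick a direction distinct from both tangent lines of $\gamma_1,\gamma_2$. Once this normal form is in place, the truncation step and the sign comparison used to place $\gamma$ inside the Hölder triangle are essentially formal.
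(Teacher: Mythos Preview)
Your proof is correct and follows essentially the same approach as the paper: pick two arcs in $Z$ with finite tangency order, write them as Puiseux graphs after a linear coordinate change, and truncate at the first place they differ to produce a finitely presented arc inside the H\"older triangle $T(\gamma_1,\gamma_2)\subset Z$. The paper splits into sub-cases (first differing exponent versus first differing coefficient, and the sign of the discrepancy), whereas you handle all of these uniformly via $f_i(x)=P(x)+a_i x^\beta+o(x^\beta)$ with $c$ chosen strictly between $a_1$ and $a_2$; this is a cosmetic streamlining, not a different argument.
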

\begin{proof}
Since $\mu(Z)<\infty$, there exist a pair of arcs $\gamma_1, \gamma_2 \in Z$. By definition of zone, we have that $\gamma \in Z$, for all $\gamma \subset T(\gamma_1,\gamma_2)$. If $tord(\gamma_1,\gamma_2)=1$, then $\gamma_1$ and $\gamma_2$ are not tangent at $0$. It means that there is a line $l$ given by $l(x)=(a,b)x$, such that $l \subset T(\gamma_1,\gamma_2)$. Hence $l \in Z$. Notice that $l$ has a finite presentation. Consider the case where $tord(\gamma_1,\gamma_2)>1$. Then, $\gamma_1$ and $\gamma_2$ are tangents at $0$. We can suppose that $\gamma_1$ and $\gamma_2$ are tangent to $x$-axes. Let  $\gamma_1(x)=(x,y_1(x))$ and $\gamma_2(x)=(x,y_2(x))$ be a Puiseux parametrizations , where $y_i=[0,\delta]\rightarrow \mathbb{R}$ are convergent series with fractional exponents. Notice that if $y_i(x)$ is a finite serie, for some $i$, or $l_1=\{(x,0)\} \in Z$, we are done. Suppose its no so.  Let $y_1(x)=\sum_{i=1}^{\infty}a_ix^{\alpha_i}$ and $y_2(x)=\sum_{i=1}^{\infty}b_ix^{\beta_i}$, where $\alpha_1<\alpha_2<\ldots$ and $\beta_1<\beta_2<\ldots$. Since $\gamma_1 \neq \gamma_2$, there is two possibilities:
\begin{enumerate}
\item There exists the smallest exponent $\alpha_r \neq \beta_r$ such that $\alpha_i=\beta_i$ and $a_i=b_i$ for $i=1,2\ldots,r-1$.
\item There exists the smallest coefficient $a_{r} \neq b_{r}$ such that $\alpha_i=\beta_i$, $i=1,2,\ldots, r$ and $a_i=b_i$, $i=1,2,\ldots,r-1$. 
\end{enumerate}
Consider the possibility (1). Assume $\beta_r<\alpha_r$. 
\begin{itemize}
\item Suppose $b_r>0$. Notice that $y_2(x)-y_1(x)=b_rx^{\beta_r}+o(x^{\beta_r})>0$, for $x$ sufficiently small.  Take $\gamma(x)=(x,y(x))=(x,\sum_{i=1}^{r-1}a_ix^{\alpha_i}+(a_r+\delta)x^{\alpha_r})$, $\delta>0$. Then, $y(x)-y_1(x)=\delta x^{\alpha_r}+o(x^{\alpha_r})>0$ and $y_2(x)-y(x)=b_rx^{\beta_r}+o(x^{\beta_r})>0$, $x$ sufficiently small. Then, $\gamma \subset T(\gamma_1,\gamma_2)$ and $\gamma \in Z$. Hence,$\gamma$ is a finitely presented arc. 
\item Supose $b_r<0$. Notice, in this case, that $y_1(x)-y_2(x)=-b_rx^{\beta_r}+o(x^{\beta_r})>0$. Let a finitely presented arc given by $\gamma(x)=(x,y(x))=(x,\sum_{i=1}^{r-1}a_ix^{\alpha_i}+(b_r+\delta)x^{\beta_r})$, where $0<\delta<-b_r$. Then, $y(x)-y_2(x)=\delta x^{\beta_r}+o(x^{\beta_r})>0$ and $y_1(x)-y(x)=(-b_r-\delta)x^{\beta_r}+o(x^{\beta_r})>0$. Then, $\gamma \in Z$.
\end{itemize}
\item Consider the possibility (2). We have that $y_2(x)-y_1(x)=(b_r-a_r)x^{\alpha_r}+o(x^{\alpha_r})$. We assume that, $b_r-a_r>0$. Otherwise, we take  $y_1(x)-y_2(x)$. Let $\gamma(x)=(x,y(x))=(x,\sum_{i=1}^{r-1}a_ix^{\alpha_i}+(\frac{a_r+b_r}{2})x^{\alpha_r})$. Then, $y(x)-y_1(x)=(\frac{b_r-a_r}{2})x^{\alpha_r}+o(x^{\alpha_r})>0$ and $y_2(x)-y(x)=(\frac{b_r-a_r}{2})x^{\alpha_r}+o(x^{\alpha_r})>0$. Then, $\gamma \in Z$.
Thus, the lemma is proved.
\end{proof}

A zone is called closed if there are two arcs belonging to it, such that the order of contact between them is equal to the size of the zone. Let $Z$ be a closed zone. An arc $\gamma$ is called \emph{generic} for the zone $Z$ if there exist two different arcs $\gamma_1, \gamma_2 \in Z$, belonging to different sides of $\gamma$ such that
\begin{center}
$tord(\gamma,\gamma_1)=tord(\gamma,\gamma_2)=\mu(Z)$.
\end{center}

Let $f:(\mathbb{R}^2,0)\rightarrow (\mathbb{R},0)$ be a germ of a continuous subanalytic function and let $\gamma$ be an arc. We denote by $Z_{\gamma}f$ the zone of the arcs $\tilde{\gamma}$ such that $\gamma \in Z_{\gamma}f$ and 
$ord_{\tilde{\gamma}}f=ord_{\gamma}f$ for any arc $\tilde{\gamma} \in Z_{\gamma}f$.
\begin{remark}
\emph{A arc $\gamma$ is called \emph{generic with respect to $f$} if $\gamma$ is generic for the zone  $Z_{\gamma}f$. }
\end{remark}

A zone $Z$ is called a \emph{monotonicity zone} of $f$ if for any two arcs $\gamma_1, \gamma_2$ such that $ord_{\gamma_1}f=ord_{\gamma_2}f$ and for any arc $\gamma$, belonging to the H\"older triangle, defined by $\gamma_1$ and $\gamma_2$ one has:
\begin{center}
$ord_{\gamma}f=ord_{\gamma_1}f=ord_{\gamma_2}f$.
\end{center}
Let $Z$ be a monotonicity zone of $f$. Let us define $Q_{Z,f}$ as the set of value of $ord_{\gamma} f$ for all the arcs $\gamma \in Z$.

\begin{remark}
\emph{For any monotonicity zone of any continuous subanalytic function $f$, the set $Q_{Z,f}$ is open, closed or semi-open segment in $\mathbb{Q} \cup \infty$.}
\end{remark}

\section{Width function}
Let $Z$ be a monotonicity zone of $f$. We define the \emph{width function} $\mu$, associated to the zone $Z$. Let $q \in Q_{Z,f}$. Let $\gamma \in Z$ be an arc, such that $ord_{\gamma} f=q$. We define $\mu:Q_{Z,f} \rightarrow \mathbb{Q}_+$ by
\begin{center}
$\mu(q)=\mu(Z_{\gamma}f)$.
\end{center}
Since $Z$ is a monotonicity zone, the set of the arcs $\tilde{\gamma}$ in $Z$, such that $ord_{\tilde{\gamma}}f=q$ form a zone for any $q \in Q_{Z,f}$. In other words, $\mu$ is well defined.

\begin{remark}
\emph{The set of monotonicity zones is semi ordered by inclusions. That is why one can consider a maximal monotonicity zone, containing an arc $\gamma$. Notice, that on a maximal monotonicity zone the function $\mu$ is well defined because the maximal monotonicity zone is a monotonicity zone.}
\end{remark}
Notice, that for a continuous subanalytic function, all the maximal monotonicity zones are closed (it follows from \cite{BFGG}).
\begin{remark}
\emph{Let $f{:}(\mathbb{R}^2,0)\rightarrow (\mathbb{R},0)$ be a germ of an analytic function. Writing $f=f_m+f_{m+1}+\ldots+f_k+\ldots$, where $f_k$ is a homogeneous polynomial of degree $l$, we have that $\mu(m)=1$.  Notice that all the arcs $\gamma$ such that $ord_\gamma f=m$ are generic with respect to $f$. Moreover, if $f$ is reduced, the equality $\mu(m)=1$ provides a {subanalytic} bi-Lipschitz invariance of the multiplicity $m$ (a particular case of the theorem of Trotman and Risler (see \cite{TR})).   }
\end{remark}

\section{Main result} 
Let $f:(\mathbb{R}^2,0) \rightarrow (\mathbb{R},0)$ be a germ of a subanalytic function.
\begin{definition}\label{maximal}
\emph{An arc $\gamma$ is called of \emph{maximal order} with respect to $f$, if:}
\begin{enumerate}
\item  \emph{For any arc $\tilde{\gamma}$ such that $tord(\tilde{\gamma},\gamma)\geq {\mu(Z_{\gamma}f)}$ we have }
\begin{center}
$ord_{\tilde{\gamma}}(f)=ord_{\gamma}(f)$;
\end{center}
\item \emph{If $tord(\tilde{\gamma},\gamma) < {\mu(Z_{\gamma}f})$ we have $ord_{\tilde{\gamma}}(f)<ord_{\gamma}(f)$, for all $\tilde{\gamma} \subset Z$, where $Z$ is a maximal monotonicity zone of $f$ containing $\gamma$}.
\end{enumerate}
\end{definition}
In this case, we say that $Z_{\gamma}f$ is \emph{a zone of maximal order}. Notice that a zone of maximal order is completely determined by $\mu(Z_{\gamma}f)$.

\begin{remark}
{\emph{Let $f:(\mathbb{R}^2,0) \rightarrow (\mathbb{R},0)$ be a germ of a analytic function. For all arcs $\gamma \subset f^{-1}(0)$, we have that $\mu(Z_{\gamma}f)=\infty$. In other words, $f^{-1}_{\mathbb{R}}(0)$ is a finite union of special zones. All the special zones are formed by arcs of maximal order with respect to f. }}
\end{remark}
{
On the other hand, an arc of the maximal order is not necessarily special. For instance, consider the function $f(x,y)=x^8+y^6$. The $x$-axes is a arc of maximal order, where $\{(x,0)\} \nsubseteq f^{-1}(0)$.}

\begin{lemma}
\emph{There are finitely many zones, containing the arcs of maximal order.}
\end{lemma}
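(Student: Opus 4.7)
The plan is to reduce the claim to the finiteness of the pizza decomposition of $f$ from \cite{BFGG}: the pizza partitions the germ of $(\mathbb{R}^2,0)$ at $0$ into a finite sequence of H\"older triangles (the pieces of the pizza), on each of which $\mathrm{ord}_\gamma f$ is an affine function of the H\"older contact parameter of $\gamma$. In particular, the collection of maximal monotonicity zones of $f$ is finite.

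First I would show that every zone $Z$ of maximal order is contained in a single maximal monotonicity zone $\mathcal{Z}$ of $f$ and that $Z$ is then determined by its size $\mu(Z)\in\mathbb{Q}_+\cup\{\infty\}$, as already remarked after Definition \ref{maximal}. Fix such a $\mathcal{Z}$; then $\mathrm{ord}_\gamma f$ is an affine, hence monotone, function of the position of $\gamma$ in $\mathcal{Z}$. Condition (2) of Definition \ref{maximal}, which forces the order to strictly decrease as the contact with $\gamma$ drops below $\mu(Z)$, selects a unique top-level subzone of $\mathcal{Z}$: either the whole $\mathcal{Z}$, if the slope is zero, or a single boundary arc of $\mathcal{Z}$, otherwise. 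Thus there is at most one zone of maximal order per maximal monotonicity zone.

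It remains to account for zones of maximal order carried by the common boundary arcs between adjacent pieces of the pizza. Such a boundary arc $\gamma$ qualifies as maximal order precisely when the two affine order functions on the neighbouring pieces both decrease as one moves away from $\gamma$; the candidate arcs form a finite subset of the finitely many pizza boundary arcs. Combining the two counts shows that the number of zones of maximal order is bounded by a multiple of the number of pizza pieces, which is finite by \cite{BFGG}.

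The main obstacle I anticipate is justifying the uniqueness of the top-level inside a fixed monotonicity zone: one must rule out the possibility that several nested subzones of one $\mathcal{Z}$ simultaneously satisfy Definition \ref{maximal}. The affineness of the pizza width function and the monotonicity of $\mathrm{ord}_\gamma f$ in $\mathcal{Z}$ should suffice, but some care is needed to treat separately the case $\mu(Z)=\infty$ --- special zones carried by arcs in $f^{-1}(0)$, or isolated arcs of maximal order as in the example $f(x,y)=x^8+y^6$ --- from the generic case $\mu(Z)\in\mathbb{Q}_+$.
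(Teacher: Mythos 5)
Your approach is correct but follows a genuinely different route from the paper's. The paper invokes Fukuda's conical structure theorem to observe that $|f|$ restricted to a small circle $\mathbb{S}^1(0,\epsilon)$ has finitely many local minima, and then identifies each zone of maximal order with such a local minimum — a two-sentence argument. You instead reduce to the finiteness of the pizza decomposition from \cite{BFGG}, together with the observation that each maximal monotonicity zone carries at most one zone of maximal order (the ``top'' of the order function, which is unique because the level sets of $\mathrm{ord}_{\cdot}f$ inside a monotonicity zone are connected by definition), plus finitely many boundary arcs. Your route leans more heavily on the machinery of \cite{BFGG} and is therefore less self-contained, but it sidesteps having to justify the correspondence between local minima of $|f|$ on a circle and zones of maximal order, which the paper asserts without proof. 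Two small imprecisions to repair: the theorem of \cite{BFGG} says the \emph{width function} $\mu_i$ is affine on each pizza piece $T_i$, not that $\mathrm{ord}_\gamma f$ is affine in a H\"older contact parameter — these are related but not the same statement; and ``the slope is zero'' does not by itself mean $\mathrm{ord}_\gamma f$ is constant on $\mathcal{Z}$, the condition you actually want is that $Q_{Z,f}$ reduces to a point. Neither flaw undermines the finiteness count, but both should be fixed before the argument is written out in full.
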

\begin{proof}
Notice that the analytic functions one has the Fukuda conical structure theorem (see \cite{F}). It means that for sufficiently small $\epsilon$, $|f||_{\mathbb{S}^1(0,\epsilon)}$ has finitely many local maximum and minimums. On the other hand, any zone containing the arcs of the maximal order with respect to $f$ corresponds to a local minimum of $|f||_{\mathbb{S}^1(0,\epsilon)}$. That is why the number of these zones is finite.
\end{proof}
\begin{theorem}\label{genericity}
\emph{Let $f$ be a germ of a analytic function. Then, any arc $\gamma$ is generic with respect to $f$.}

\end{theorem}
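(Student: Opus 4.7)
The plan is to produce two witness arcs in $Z_\gamma f$ having $tord$ exactly $\mu:=\mu(Z_\gamma f)$ with $\gamma$, one on each side of $\gamma$, via a Newton polygon analysis in coordinates adapted to $\gamma$. The case $\mu=\infty$ (special zone) is trivial, so I assume $\mu<\infty$. The first step is to obtain one witness $\gamma_1\in Z_\gamma f$ with $tord(\gamma,\gamma_1)=\mu$: using the closed property of zones $Z_\gamma f$ associated to subanalytic continuous functions (following \cite{BFGG}) there is a pair $\gamma_1',\gamma_2'\in Z_\gamma f$ realising the size, $tord(\gamma_1',\gamma_2')=\mu$; since both $tord(\gamma,\gamma_i')\geq\mu$ while the ultrametric inequality gives $tord(\gamma_1',\gamma_2')\geq\min\bigl(tord(\gamma,\gamma_1'),tord(\gamma,\gamma_2')\bigr)$, at least one $\gamma_i'$ must satisfy $tord(\gamma,\gamma_i')=\mu$; call it $\gamma_1$.

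Next I standardise coordinates so that $\gamma$ lies on a coordinate axis. Writing $\gamma(x)=(x,y(x))$ with $y$ a Puiseux series and $N$ a common denominator of its exponents, the substitution $x=t^N$, $v=y-y(t^N)$ turns $\gamma$ into $\{v=0\}$ and makes $\tilde f(t,v):=f(t^N,v+y(t^N))$ analytic in $(t,v)$ near the origin. Set $\beta:=N\mu$. In these coordinates $\gamma_1$ becomes $v=v_1(t)=c_1t^\beta+o(t^\beta)$ for some $c_1\neq 0$, and the H\"older triangle $T(\gamma,\gamma_1)\subset Z_\gamma f$ (by the zone axiom) contains every arc $\gamma_c:=\{v=ct^\beta\}$ with $c$ strictly between $0$ and $c_1$; for each such $c$ one has $ord_t\tilde f(t,ct^\beta)=Nq$, where $q:=ord_\gamma f$.

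Expanding $\tilde f=\sum_{i,j\geq 0}\tilde c_{ij}t^iv^j$ and grouping by total $t$-order yields
\[
\tilde f(t,ct^\beta)=\sum_n A_n(c)\,t^n,\qquad A_n(c)=\sum_{i+j\beta=n}\tilde c_{ij}\,c^j.
\]
Each $A_n$ is a polynomial in $c$, because $\beta>0$ is rational and the index set $\{(i,j)\in\N^2:i+j\beta=n\}$ is finite. The identity $ord_t\tilde f(t,ct^\beta)=Nq$ on the open interval of admissible $c$ forces $A_n\equiv 0$ for every $n<Nq$ (a polynomial vanishing on an interval vanishes identically), so no monomial of $\tilde f$ has $i+j\beta<Nq$. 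Hence $ord_t\tilde f(t,ct^\beta)\geq Nq$ for every $c\in\R$, with equality precisely when $A_{Nq}(c)\neq 0$. The polynomial $A_{Nq}$ is not identically zero: its constant term is $\tilde c_{Nq,0}$, the leading coefficient of $\tilde f(t,0)$, which is nonzero by definition of $q$. Picking $c_2$ of the sign opposite to $c_1$ outside the finite real zero set of $A_{Nq}$, the arc $\gamma_{c_2}$ lies in $Z_\gamma f$ on the opposite side of $\gamma$ from $\gamma_1$ with $tord(\gamma,\gamma_{c_2})=\mu$, completing the required pair.

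The crucial step is the passage from $ord_t\tilde f(t,ct^\beta)=Nq$ on an interval of $c$'s to $A_n\equiv 0$ for all $n<Nq$: this is where analyticity enters essentially, since it is what makes each $A_n$ an honest polynomial (only finitely many monomials of $\tilde f$ contribute to each coefficient), so that vanishing on an interval forces vanishing identically. Without analyticity this polynomiality fails and the argument collapses, consistent with the paper's remark that the resulting continuity and positivity properties of the pizza do not hold for general subanalytic $f$.
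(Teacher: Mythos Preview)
Your argument is correct and genuinely different from the paper's. The paper works in the original coordinates and proceeds by a direct Newton--Puiseux computation: it takes a finitely presented arc in the zone (via Lemma~\ref{finitely presented}), writes $f=f_0+h$ as a quasi-homogeneous initial part plus higher terms relative to the exponent $\beta_r$ of the last step of the parametrisation, and then deforms $\gamma$ by $\gamma^{\pm s,T}(x)=\gamma(x)+(0,\pm s x^T)$. Two explicit claims compute $ord_x f_0(\gamma^{s,T})$ (affine in $T$) and $ord_x h(\gamma^{s,T})$ (equal to $ord_\gamma f$), and solving $ord_x f_0(\gamma^{s,T})=ord_\gamma f$ for $T$ produces the value $T_0=\mu(Z_\gamma f)$ together with the two symmetric witnesses $\gamma^{\pm s,T_0}$. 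In particular the paper \emph{computes} $\mu(Z_\gamma f)$ from the Newton data, and the resulting formula $ord_x f_0(\gamma^{s,T})=v_l+j_0(T-\beta_r)$ is reused verbatim in the positive-slope theorem of Section~6.

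You instead straighten $\gamma$ by the substitution $x=t^N$, $v=y-y(t^N)$, use closedness of $Z_\gamma f$ and the ultrametric inequality to obtain one witness at the correct $tord$, and then observe that the coefficients $A_n(c)$ of $\tilde f(t,ct^\beta)$ are polynomials in $c$; vanishing on the interval between $0$ and $c_1$ forces $A_n\equiv 0$ for $n<Nq$, while $A_{Nq}(0)=\tilde c_{Nq,0}\neq 0$ lets you pick $c_2$ of opposite sign. This is shorter and isolates precisely where analyticity is used (finiteness of each $A_n$), but it does not by itself produce the explicit affine dependence of $ord_{\gamma^{s,T}} f$ on $T$ that the paper exploits later. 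Two small points worth making explicit in your write-up: after a rotation you may assume $\gamma$ is a graph over the $x$-axis, and for $t>0$ the map $(t,v)\mapsto(t^N,v+y(t^N))$ is a bijection onto a half-plane so that ``sides of $\gamma$'' correspond to the sign of $v$; and $Nq\in\Z$ because $f$ is analytic and the exponents of $y$ have common denominator $N$, so $\tilde c_{Nq,0}$ is indeed a Taylor coefficient of $\tilde f$.
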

\begin{proof}
By the previous remarks, there is two cases where the genericity of $\gamma$ with respect to $f$ were obtained:
\begin{enumerate}
\item $Z_{\gamma} f=\{\gamma\}$;
\item $\mu(Z_{\gamma} f)=1$.
\end{enumerate}
Then, let $1<\mu(Z_{\gamma} f) <\infty$. Using a orthogonal transformation on $\mathbb{R}^2$, we get that $Z_{\gamma} f$ is tangent to $x$-axes. We consider the case where $\gamma$ is parametrized by $\gamma(x)=(x,b_1x^{\beta_1}), \ b_1 \neq 0, \ \beta_1>1$. Let $f^{-1}(0)=\cup_{i=1}^rB_i$, where $B^0_i(x)=(x,a_ix^{\frac{p_i}{q_i}}), \ i=1,\ldots,r$ are initial parametrizations of the branches $B_i$. We consider the following cases:

\medskip

A) For all $i$, $a_i$ is not real ;

\medskip

Proof of genericity: The composition $f(\gamma(x))$ can be described as follows:
\begin{center}
$f(\gamma(x))=ax^{v_1}+o(x^{v_1})$,
\end{center}
where $a=f_0(1,b_1)$, $f_0$ is a quasi-homogeneous polynomial with weights $(1,\beta_1)$ and $v_1$ is the weighted degree associated. The notation $o(x^{v_1})$ mean terms of order $>v_1$. Suppose that $\beta_1=\frac{p_l}{q_l}$, for some $l$. Let $\gamma^+_i$ and $\gamma^-_i$ be arcs defined as follows: 
\begin{center}
$\gamma^+_i(x)=(x,(b_1+\delta)x^{\beta_1})$ and $\gamma^-_i(x)=(x,(b_1-\delta)x^{\beta_1}), \ \delta>0$.
\end{center}
Since $a_i$ is not real, the polynomial $P_l(x)=f_0(1,b_1+x)$ do not have real roots. Hence $ord_{\gamma^+_i} f=ord_{\gamma^-_i} f=v_1$, Moreover, 
\begin{center}
$tord(\gamma^+_i,\gamma^-_i)=ord_x\|(x,(b_1-\delta)x^{\frac{p_l}{q_l}})-(x,(b_1+\delta)x^{\frac{p_l}{q_l}})\|=ord_x\|-2\delta x^{\frac{p_l}{q_l}}\|=\frac{p_l}{q_l}=\beta_1$
\end{center}
 
and $tord(\gamma,\gamma^-_i)=tord(\gamma^+_i,\gamma)=tord(\gamma^+_i,\gamma^-_i)$. Notice that if $\beta_0 < \beta_1$ then $i+\beta_0j<i+\beta_1 j=v_1$. Hence, $\mu(Z_{\gamma}f)=\beta_1$ and $\gamma$ is generic. Otherwise, if $\beta_1 \neq \frac{p_i}{q_i}$, for all $i$, we have that $P_l$ is a monomial and the argument is the same. Then $\gamma$ is generic with respect to $f$.

\medskip

B) For some $l$, $a_l$ is real. 

\medskip

Proof of genericity:

Let $\beta_1=\frac{p_l}{q_l}$. We present the serie $f \circ \gamma$ in the following form: $f(\gamma(x))=x^{v_1}P_l(b_1)+h(\gamma(x))$, where $ord_xh(x)>v_1$. If $P_l(b_1)\neq 0$, by continuity of $P_l$, there is $\delta>0$, sufficiently small such that $P_l(s) \neq 0$, $\forall s \in [b_1-\delta,b_1+\delta]$. Let $\gamma^-_i$ and $\gamma^+_i$ be the arcs defined as above. Then, we get that $ord_{\gamma^+_i} f=ord_{\gamma^-_i} f=v_l$ and $tord(\gamma_i^-,\gamma_i^+)=\beta_1$. Hence, $\gamma$ is generic. 

Let $P_l(b_1)=0, \ b_1 \neq 0$ (the case where $b_1=0$ can be considered in the same way). We have that $\gamma(x)=(x,b_1x^{\frac{p_l}{q_l}})$ is the real initial parametrization of the some branch of $f^{-1}(0)$. Then:
\begin{equation}\label{equation 1}
f(x,b_1x^{\frac{p_l}{q_l}})=x^{v_1}P_l(b_1)+h(\gamma(x))=0
+h(\gamma(x))=h(\gamma(x)),
\end{equation}
where $P_l(b_1)=f_0(1,b_1)$. Notice that we may write:

\begin{center}
$f(x,y)=f_0(x,y)+h(x,y)=\displaystyle \sum_{i+\frac{p_l}{q_l}j=v_l}a_{ij}x^iy^j+\displaystyle \sum_{i+\frac{p_l}{q_l}j>v_l}a_{ij}x^iy^j$.
\end{center}

Hence, the expression (\ref{equation 1}) is given by

\medskip

$f(x,b_1x^{\frac{p_l}{q_l}})=\displaystyle \sum_{i+\frac{p_l}{q_l}j>v_l}a_{ij}x^i(b_1x^{\frac{p_l}{q_l}})^j=\displaystyle \sum_{i+\frac{p_l}{q_l}j>v_l}a_{ij}b_1^jx^{i+{\frac{p_l}{q_l}j}}=x^{v_l}(\sum_{i+\frac{p_l}{q_l}j>v_l}a_{ij}b_1^jx^{i+{\frac{p_l}{q_l}j-v_l}})=x^{v_l}g(x)$.

\medskip

Since $\mu(Z_{\gamma}f)<\infty, \ \ ord_x(g(x)) < \infty$. Then $ord_{\gamma}f=v_l+ord_xg(x) $. In order to obtain the genericity of $\gamma$, we consider the following deformation:
\begin{center}
$\gamma^{s,T}(x)=(x,b_1x^{\frac{p_l}{q_l}}+sx^T), \ s \in \mathbb{R}^*, \ T \in (\frac{p_l}{q_l},\infty) \cap \mathbb{Q}$. 
\end{center}

\medskip

Claim 1:

$f_0(\gamma^{s,T}(x))\neq 0$ and $ordf_0(\gamma^{s,T}(x))$ depends of $T$ as affine function, $s \neq 0$.

\medskip

Proof of claim 1: We calculate the function $f_0(\gamma^{s,T}(x))$ as follows:

\medskip

$f_0(\gamma^{s,T}(x))=\displaystyle \sum_{i+\frac{p_l}{q_l}j=v_l}{a}_{ij}x^i(b_1x^{\frac{p_l}{q_l}}+sx^T)^j=\displaystyle \sum_{i+\frac{p_l}{q_l}j=v_l}{a}_{ij}x^i(b_1x^{\frac{p_l}{q_l}}(1+\frac{s}{b_1}x^{T-\frac{p_l}{q_l}}))^j$

\medskip

$=\displaystyle \sum_{i+\frac{p_l}{q_l}j=v_l}{a}_{ij}x^i[b_1^jx^{\frac{p_l}{q_l}j}](1+\frac{s}{b_1}x^{T-\frac{p_l}{q_l}})^j=\displaystyle \sum_{i+\frac{p_l}{q_l}j=v_l}{a}_{ij}b_1^jx^{i+{\frac{p_l}{q_l}j}}(1+\frac{s}{b_1}x^{T-\frac{p_l}{q_l}})^j=$

\medskip

$x^{v_l}(\displaystyle \sum_{i+\frac{p_l}{q_l}j=v_l}{a}_{ij}b_1^j+ \displaystyle \sum_{i+\frac{p_l}{q_l}j=v_l}j{a}_{ij}b_1^j\frac{s}{b_1}x^{T-\frac{p_l}{q_l}}+\ldots+\displaystyle \sum_{i+\frac{p_l}{q_l}j=v_l}{a}_{ij}b_1^j(\frac{s}{b_1})^jx^{j(T-\frac{p_l}{q_l})} )=$

\medskip

$x^{v_l}(P_l(b_1)+ \displaystyle \sum_{i+\frac{p_l}{q_l}j=v_l}j{a}_{ij}b_1^{j-1}sx^{T-\frac{p_l}{q_l}}+\ldots+\displaystyle \sum_{i+\frac{p_l}{q_l}j=v_l}{a}_{ij}s^jx^{j(T-\frac{p_l}{q_l})} )=$

\medskip

$x^{v_l}( 0+\displaystyle \sum_{i+\frac{p_l}{q_l}j=v_l}j{a}_{ij}b_1^{j-1}sx^{T-\frac{p_l}{q_l}}+\displaystyle \sum_{i+\frac{p_l}{q_l}j=v_l}\frac{j(j-1)}{2}{a}_{ij} b_1^{j-2}s^2x^{2({T-\frac{p_l}{q_l}})}\ldots+\displaystyle \sum_{i+\frac{p_l}{q_l}j=v_l}{a}_{ij}(sx^{(T-\frac{p_l}{q_l})})^j )$

\medskip

$=x^{v_l}(\displaystyle P_1(b_1)sx^{T-\frac{p_l}{q_l}}+P_2(b_1)(sx^{T-\frac{p_l}{q_l}})^2\ldots+P_{j_r}(b_1)(sx^{(T-\frac{p_l}{q_l})})^{j_r} )$,

where $j_1<j_2< \ldots <j_r$ are all the ordinates of the points $(i,j)$ such that $i+\frac{p_l}{q_l}j=v_l$. Thus, we have 
\begin{equation}\label{genericc}
ord_xf_0(\gamma^{s,T}(x))=v_l+j_0(T-\frac{p_l}{q_l}),
\end{equation}
for some value of $j_0$, $T \in (\frac{p_l}{q_l},+\infty) \cap \mathbb{Q}$.

\medskip

\noindent The claim 1 is proved.

\medskip

Claim 2: The following identity is true:
\begin{center}
$ord_x h(\gamma^{s,T}(x))=ord_{\gamma}f$,
\end{center}
for all $T \in (\frac{p_l}{q_l}, +\infty)$.
 
\medskip

Proof of claim 2. 

\medskip

$h(\gamma^{s,T}(x))=\displaystyle \sum_{i+\frac{p_l}{q_l}j>v_l}\tilde{a}_{ij}x^i(b_1x^{\frac{p_l}{q_l}}+sx^T)^j=\sum_{i+\frac{p_l}{q_l}j>v_l}\tilde{a}_{ij}x^i(b_1x^{\frac{p_l}{q_l}}+sx^T)^j=$

\medskip

$\sum_{i+\frac{p_l}{q_l}j>v_l}\tilde{a}_{ij}x^i(b_1x^{\frac{p_l}{q_l}}+b_1x^{\frac{p_l}{q_l}}\frac{s}{b_1}x^{T-\frac{p_l}{q_l}})^j=\sum_{i+\frac{p_l}{q_l}j>v_l}\tilde{a}_{ij}x^i(b_1x^{\frac{p_l}{q_l}}(1+\frac{s}{b_1}x^{T-\frac{p_l}{q_l}}))^j=$

\medskip

$x^{v_l}(\sum_{i+\frac{p_l}{q_l}j>v_l}\tilde{a}_{ij}b_1^jx^{i+\frac{p_l}{q_l}j-v_l}(1+\frac{s}{b_1}x^{T-\frac{p_l}{q_l}})^j)=$

$x^{v_l}(\sum_{i+\frac{p_l}{q_l}j>v_l}\tilde{a}_{ij}b_1^jx^{i+\frac{p_l}{q_l}j-v_l}+\sum_{i+\frac{p_l}{q_l}j>v_l}\tilde{a}_{ij}j\frac{s}{b_1}b_1^jx^{i+\frac{p_l}{q_l}j-v_l+(T-\frac{p_l}{q_l})}+\ldots$ \newline

\medskip

$
+\sum_{i+\frac{p_l}{q_l}j>v_l}\tilde{a}_{ij}b_1^j\frac{{s_1}^j}{{b_1}^j}x^{i+\frac{p_l}{q_l}j-v_l+j(T-\frac{p_l}{q_l})} \ )=$

\medskip
\begin{equation}\label{exp} 
x^{v_l}(g(x)+g_1^T(x)+\ldots+g_j^T(x)),
\end{equation}
where $ord_xg(x)<ord_xg_1^T(x)<\ldots<g_j^T(x)$. Then

\medskip
\begin{center}
$ord_x h(\gamma^{s,T}(x))=v_l+ord_xg(x)=ord_{\gamma}f$.
 
\end{center}
Thus, the claim 2 is proved.
 
 \medskip
 
 Now, we consider, for generic parameter $s \in \mathbb{R}^*$, the equation
 \begin{equation}\label{exponent}
 ord_xf_0(\gamma^{s,T}(x))=ord_{\gamma} f.
 \end{equation}
 \begin{center}
 $ \Rightarrow v_l+j_0(T-\frac{p_l}{q_l})=v_l+ord_xg(x)$ 
 $ \Rightarrow T-\frac{p_l}{q_l}=\frac{ord_xg(x)}{j_0}$.  $ \therefore T_0=\frac{ord_xg(x)}{j_0}+\frac{p_l}{q_l}$.
 \end{center}
 It means that, for this $T_0$, $ord_{\gamma^{s,T_0}}f$ is equal to $ord_{\gamma} f$ and then, the size of the zone $Z_{\gamma} f$ is equal to $T_0$. In order to obtain the genericity of $\gamma$, it is enough to take $\gamma^{s,T_0} , \gamma^{-s,T_0}$, $s \in \mathbb{R}^*$ such that $ord_xf_0(\gamma^{\pm s,T}(x))=ord_{\gamma} f$.

\medskip

Consider the general case. By lemma \ref{finitely presented}, we may consider an arc $\gamma \in Z_{\gamma} f$ parametrized by $\gamma(x)=(x,b_1x^{\beta_1}+b_2x^{\beta_2}+\ldots+b_rx^{\beta_r})$, where $\beta_1<\beta_2<\ldots <\beta_r$. The parametrization satisfies the following:  
\begin{equation}\label{algorithm}
 ord_{\gamma_{\beta_{1}}}f <  ord_{\gamma_{\beta_{2}}}f < \ldots < ord_{\gamma_{\beta_{r}}}f= ord_{\gamma}f, 
 \end{equation} 
where  $\gamma_{\beta_i}(x)=(x,b_1x^{\beta_1}+b_2x^{\beta_2}+\ldots+b_ix^{\beta_i})$. We say that $\gamma(x)$ is a minimal parametrization providing the order of the zone $Z_{\gamma} f$. In this way, we get that $\gamma(x)$ is a initial parametrization for some branch $B_i {\subset} f^{-1}_{\mathbb{C}}(0)$, where the last exponent $\beta_r$ is obtained from the step $r$ of the Newton algorithm. Recursively, we have:
\begin{equation}
f(\gamma(x))=f_r(\gamma(x))+h_r(\gamma(x))=0+h_r(\gamma(x)).
\end{equation}
The previous equation providing $\mu(Z_\gamma f)$ is obtained now considering $\gamma^{{\pm}s,T}(x)=(x,b_1x^{\beta_1}+b_2x^{\beta_2}+\ldots+b_rx^{\beta_r}\pm sx^T), \ T {\in} (\beta_r, +\infty) \cap \mathbb{Q}$. In the same way, we obtain that $ord_xf_r(\gamma^{s,T}(x)) \neq 0$ depends of $T$ as affine function for generic $s$. Thus, resolving the equation  $ord_xf_r(\gamma^{s,T}(x))=ord_{\gamma} f$, we show that the genericity property is obtained by the previous arguments.

\end{proof}

\begin{remark}
\emph{Notice that if $\gamma$ is an arc of maximal order, $\gamma$ parametrize $B_i \cap \mathbb{R}^2$. Otherwise, when the zone $Z_\gamma f$ do not have a maximal order, there exists $s_0$ such that $ord_xf_r(\gamma^{s_0,T_0}(x))>ord_{\gamma} f$ where $\mu(Z_{\gamma} f){=}T_0$. It means that $T_0$ is a next Puiseux exponent of the parametrization $B_i(x)$.}
\end{remark}
\section{Continuity of Pizza}
We are going to remind the main result of the paper \cite{BFGG}. Let $f:\mathbb{R}^2,0 \rightarrow \mathbb{R},0$ be a germ of a continuous subanalytic function. Then there exists a partition of the germ of $\mathbb{R}^2$ into H\"older triangles $T_i$ elementary (see \cite{BFGG}, page 3) with respect to $f$ such that for each triangle the width function is well defined. Moreover on each $T_i$ the width function $\mu_i: Q(T_i) \rightarrow \mathbb{Q}$ is  an affine function. 
\begin{definition}
\emph{The width function, associated to a pizza of a germ of a continuous subanalytic function is called continues if for the adjacent triangles $T_1$ bounded by $\gamma_1$ and $\gamma$ and $T_2$, bounded by $\gamma_2$ and $\gamma$ (where $\gamma$ is a commom boundary) we have the following: Let $ord_{\gamma} f=q$. Then $\mu_1(q)=\mu_2(q)$. }
\end{definition}
Notice, that for a general continues subanalytic function, the width function $\mu$ is not necessarily continues. For instance, consider the function $f(x,y)=min\{x^2, y^3\}$. Let $\gamma(t)=(t^{\frac{3}{2}},t)$. Then, $ord_\gamma f=3$. However, $\mu$ is not continues because $\mu_1(3)=1$ and $\mu_2(3)=\frac{3}{2}$.
\begin{theorem}
\emph{The width function associated to a pizza of the germ of an analytic function is continuous}.

\end{theorem}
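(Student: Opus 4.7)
The plan is to derive continuity directly from Theorem \ref{genericity}. Fix the common boundary arc $\gamma$ of two adjacent pizza pieces $T_1, T_2$ and set $q = ord_\gamma f$. Write $Z_\gamma f$ for the maximal zone of arcs of constant order $q$ containing $\gamma$ in the full space of arcs. By the definition of the width function on the monotonicity zone $T_i$, the value $\mu_i(q)$ is the size of the sub-zone of $Z_\gamma f$ consisting of those order-$q$ arcs that lie inside $T_i$; since $\gamma$ is a boundary arc of $T_i$ with $ord_\gamma f = q$, this sub-zone has $\gamma$ as one of its bounding arcs. Because this sub-zone is contained in $Z_\gamma f$, an infimum-over-fewer-pairs argument immediately yields $\mu_i(q) \geq \mu(Z_\gamma f)$ for both $i = 1, 2$.

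For the reverse inequality I would invoke the main theorem. Since $f$ is analytic, $\gamma$ is generic with respect to $f$, so $Z_\gamma f$ is a closed zone and Theorem \ref{genericity} supplies two arcs $\gamma_1', \gamma_2' \in Z_\gamma f$ on opposite sides of $\gamma$ satisfying $tord(\gamma, \gamma_i') = \mu(Z_\gamma f)$. After relabeling, $\gamma_i'$ lies on the $T_i$-side of $\gamma$, and the key geometric point (discussed below) is that $\gamma_i'$ in fact belongs to $T_i$ itself. The pair $(\gamma, \gamma_i')$ then lies in the order-$q$ sub-zone of $T_i$, witnessing $\mu_i(q) \leq tord(\gamma, \gamma_i') = \mu(Z_\gamma f)$. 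Combined with the previous paragraph, this gives $\mu_1(q) = \mu_2(q) = \mu(Z_\gamma f)$, which is exactly the continuity statement.

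The main obstacle is the geometric claim that the extremal arcs $\gamma_i'$ produced by Theorem \ref{genericity} lie inside the specific adjacent pizza triangle $T_i$ rather than in some further triangle on the same side. This should be handled by using the structure of the pizza: $T_i$ is a maximal monotonicity zone of $f$ adjacent to $\gamma$, with a well-defined H\"older exponent $\beta_i$, and every arc on the $T_i$-side of $\gamma$ whose tangency order with $\gamma$ is at least $\beta_i$ and whose order under $f$ lies in $Q(T_i)$ is captured by $T_i$. One needs to check that $\mu(Z_\gamma f) \geq \beta_i$ — otherwise the affine width function on $T_i$ could not match the geometry near $\gamma$ coming from genericity — and, once this is in place, the arcs $\gamma_i'$ automatically fall into $T_i$, completing the argument.
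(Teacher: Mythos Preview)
Your approach is essentially the paper's: both derive continuity directly from Theorem~\ref{genericity} by producing, via genericity of the boundary arc $\gamma$, order-$q$ arcs on both sides of $\gamma$ at tangency order exactly $\mu(Z_\gamma f)$, and concluding $\mu_1(q)=\mu_2(q)=\mu(Z_\gamma f)$. The paper's proof is terser and simply asserts that the existence of such arcs on both sides forces the two width values to agree, without isolating your ``main obstacle.''

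Two comments on the details you add. First, your reading of $\mu_i(q)$ as ``the size of the sub-zone of $Z_\gamma f$ consisting of order-$q$ arcs inside $T_i$'' does not match the paper's Section~3 definition, which sets $\mu(q)=\mu(Z_\gamma f)$ for the \emph{global} constant-order zone, not its intersection with the monotonicity zone. Under that literal definition there is nothing to prove at $\gamma$ itself; the nontrivial content of continuity is that the affine function $\mu_i$ on $Q(T_i)$, determined by its interior values, takes the value $\mu(Z_\gamma f)$ at the endpoint $q$. You should be explicit about which of these you are computing. Second, the obstacle of showing $\gamma_i'\in T_i$ dissolves more cheaply than your sketch suggests. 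Since $\gamma,\gamma_i'\in Z_\gamma f$ and $Z_\gamma f$ is a zone, every arc in the H\"older triangle $T(\gamma,\gamma_i')$ has order $q$. If $\gamma_i'\notin T_i$, then the opposite boundary arc of $T_i$ lies in $T(\gamma,\gamma_i')$ and hence also has order $q$, so both boundaries of $T_i$ have order $q$ and $Q(T_i)=\{q\}$; on such a degenerate piece $\mu_i$ is the single value $\mu(Z_\gamma f)$ by definition and there is nothing to check. Thus either $\gamma_i'\in T_i$ automatically (when $Q(T_i)$ is a genuine segment) or the conclusion is immediate, and no separate verification of $\mu(Z_\gamma f)\geq\beta_i$ is required.
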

\begin{proof}
Suppose that  we constructed a pizza, associated to the function $f$. Consider a H\"older triangle of the pizza. If $\gamma$ is an arc, such that $ord_{\gamma} f$ is not equal to the extremal values of the corresponding segment, then the function $\mu$ is continuous function because it is affine. Suppose $q$ is an extremal value of the segment. Then the arc $\gamma$ must belong to the zone, containing of the arcs $\tilde{\gamma}$, where $ord_{\gamma} f=q$. But in the zone all the arcs are generic (see theorem \ref{genericity}). That is why there exists an arc $\tilde{\gamma}$, situated on the left side of $\gamma$ such that $ord_{\tilde{\gamma}} f=q$ and there exists a arc $\tilde{\tilde{\gamma}}$, situated on the right side of $\gamma$, such that $ord_{\tilde{\tilde{\gamma}}}f=q$. Then,
\begin{center}
$tord(\tilde{\gamma},\tilde{\tilde{\gamma}})=tord(\tilde{\gamma},\gamma)=tord(\gamma,\tilde \tilde{\gamma})=\mu(Z_{\gamma} f)$.
\end{center}
 It means that the values of $\mu(q)$ are equal for the two triangles, where $\gamma$ is a boundary arc.
\end{proof}

\section{Positive slope}

\begin{theorem}
\emph{Let $f:(\mathbb{R}^2,0)\rightarrow (\mathbb{R},0)$ be a germ of a analytic function. Consider $Z$ be a maximal monotonicity zone with respect to $f$ such that $Q_{f,Z}$ is not a point. Then, the width function associated is a monotone increasing function}.
\end{theorem}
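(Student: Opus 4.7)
The plan is to derive positivity of the slope of $\mu$ directly from the affine formula for the size of a zone $Z_\gamma f$ that is already obtained in the proof of Theorem~\ref{genericity}. By the pizza decomposition recalled from \cite{BFGG} and the continuity theorem of the preceding section, $\mu$ is continuous and piecewise affine on $Q_{Z,f}$; so it suffices to prove that on every affine piece of $\mu$ the slope is strictly positive.

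To this end, I fix an affine piece $I \subset Q_{Z,f}$, a value $q \in I$, and an arc $\gamma \in Z$ with $ord_\gamma f = q$. After an orthogonal change of coordinates putting $Z$ tangent to the $x$-axis, the general-case construction at the end of the proof of Theorem~\ref{genericity} provides a minimal Puiseux parametrization $\gamma(x) = (x, b_1 x^{\beta_1} + \cdots + b_r x^{\beta_r})$, along which the size $\mu(Z_\gamma f)$ is computed by solving
\[
ord_x f_r(\gamma^{s,T}(x)) \;=\; ord_\gamma f
\]
in $T$, where $\gamma^{s,T}(x) = (x, b_1 x^{\beta_1} + \cdots + b_r x^{\beta_r} + s x^T)$. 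The Newton-diagram identity of Claim~1 of that proof yields $ord_x f_r(\gamma^{s,T}(x)) = v_r + j_0(T - \beta_r)$, where $v_r \in \mathbb{Q}_+$ is the weighted order and $j_0 \in \mathbb{N}$ is the smallest Newton-polygon ordinate contributing a nonvanishing coefficient. Because $Q_{Z,f}$ is not a point, $\gamma$ is not of maximal order, so $f_r$ vanishes on $\gamma$; this forces the constant term of the $T$-expansion above to vanish and hence $j_0 \geq 1$. Substituting $T$ back gives the closed form
\[
\mu(q) \;=\; \beta_r + \frac{q - v_r}{j_0},
\]
an affine function of $q$ of slope $1/j_0 > 0$.

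Combining the pointwise identity above with the hypothesis that $\mu|_I$ is affine forces the integer $j_0$ to be constant on $I$ and equal to the reciprocal of the slope of $\mu|_I$, which is therefore a strictly positive rational. Thus $\mu$ is strictly increasing on each affine piece; the continuity theorem of the preceding section then glues the pieces into a strictly increasing function on the whole segment $Q_{Z,f}$, which is exactly the required monotonicity.

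The main technical point is that everything reduces to the Newton-diagram computation of Theorem~\ref{genericity}; once that affine formula is in hand, positivity follows automatically from $j_0$ being a positive integer. The anticipated subtlety, that the triple $(\beta_r, v_r, j_0)$ depends a priori on the choice of $\gamma$ at each value of $q$, is resolved by the fact that the affinity of $\mu$ on each elementary triangle of the pizza forces these data to be coherent across that triangle, so no new ingredient beyond Theorem~\ref{genericity} is needed.
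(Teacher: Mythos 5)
Your proposal follows essentially the same line as the paper: both read the strict positivity of the slope off the Newton-diagram identity $ord_x f_r(\gamma^{s,T}(x)) = v_r + j_0(T-\beta_r)$ from Claim~1 of the genericity theorem, together with the affinity of $\mu$ on $Q_{Z,f}$. However, two things need attention.

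First, the case $\mu(Z)=1$ is not covered by your argument as written. When $\mu(Z)=1$ the zone $Z$ is not tangent to a single direction, so the step ``after an orthogonal change of coordinates putting $Z$ tangent to the $x$-axis'' has no meaning, and the minimal Puiseux parametrization $\gamma(x)=(x,b_1x^{\beta_1}+\cdots)$ with $\beta_1>1$ does not apply. The paper handles this case separately and directly: with $f=f_m+f_{m+1}+\cdots$, any line $l\not\subset f_m^{-1}(0)$ has $ord_l f = m$ and $\mu(m)=1$, while any $l_i\subset f_m^{-1}(0)\cap Z$ has $ord_{l_i} f > m$ and $\mu(ord_{l_i}f)>1$, and affinity then forces a positive slope. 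Your proof needs this branch of the argument or an explanation of why the Newton-diagram computation still applies when $Z$ is a $1$-H\"older zone.

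Second, the passage from the single equation $\mu(q)=\beta_r+(q-v_r)/j_0$ (valid at one fixed $q$, for one fixed $\gamma$) to ``an affine function of $q$ of slope $1/j_0$'' is where your coherence worry arises, and the clean resolution is different from what you propose. You do not need to compare the triples $(\beta_r,v_r,j_0)$ across different $\gamma$'s: instead, keep a single $\gamma$ (say at the upper endpoint of $Q_{Z,f}$, as the paper does) and vary $T\in(\beta_r,T_0]$. Then $q(T)=ord_{\gamma^{s,T}}f=v_r+j_0(T-\beta_r)$ sweeps an interval of orders, and $\mu(Z_{\gamma^{s,T}}f)=T$ for each such $T$, so you get the affine relation $\mu(q')=\beta_r+(q'-v_r)/j_0$ on an entire subinterval, with a single triple. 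Combined with the known affinity of $\mu$ on $Q_{Z,f}$, that already forces the slope to be $1/j_0>0$ everywhere; no ``coherence across $\gamma$'s'' claim is needed, and indeed the claim as you state it (``the affinity of $\mu$ forces $j_0$ to be constant'') does not follow from the pointwise identity alone.

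Your observation that $j_0\geq 1$ because $f_r$ vanishes along the minimal parametrization is correct, and it is the right reason the slope is positive; but note that $f_r(\gamma)=0$ is built into the construction of the minimal parametrization by the Newton algorithm, independently of whether $\gamma$ is of maximal order, so the clause ``because $Q_{Z,f}$ is not a point, $\gamma$ is not of maximal order'' is a red herring.
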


\begin{proof}
Let $Q_{f,Z}$ be a segment corresponding to the maximal monotonicity zone of $f$. Suppose that $\mu(Z)=1$. Let
\begin{center}
$f=f_m+f_{m+1}+\ldots$,
\end{center}
where $f_j$ is a homogeneous polynomial of degree $j$. We write $f_m^{-1}(0)=\bigcup_{i=1}^m l_i$, where $l_i$ is a line in $\mathbb{C}^2$. We know that $\mu(ord_l f)=1$, when $l \neq l_i$, for all $i$. Since $Q_{f,Z}$ is not a point, there is $l_i \subset Z$, for some $i$. Then, $m=ord_l f<ord_{l_i} f$ and $1=\mu(ord_{l}f)<\mu(ord_{\tilde{l}}f)$. Since $\mu$ is affine in $Q_{f,Z}$, it is follow that $\mu|_{Q_{f,Z}}$ is monotone increasing. Let $\mu(Z)>1$. As before, we suppose, without loss of generality, that $Z$ is tangent to $x$-axes. Since $Z$ is a maximal monotonicity zone, $Q_{f,Z}$ is a closed segment. Let $q$ be the upper extremal of this segment. Consider $\gamma$ be an arc with minimal parametrization 
\begin{center}
$\gamma(x)=(x,c_1x^{\alpha_1}+c_2x^{\alpha_2}+\ldots+c_rx^{\alpha_r})$,
\end{center}
where $ord_{\gamma} f=q$. Let $T_0=\mu(Z_{\gamma} f) \in (\alpha_r,+\infty) \cap \mathbb{Q}$. Notice that for all $T \in (\alpha_r,T_0)$, we have $ord_{\gamma^{s,T}}f<ord_{\gamma^{s,T_0}}f=ord_\gamma f$, where $\gamma^{s,T}(x)=\gamma(x)+(0,sx^T)$. Moreover, $ord_{\gamma^{s,T}}f$ depends of $T$ as affine function. Then $\mu(Z_{\gamma^{s,T}} f)=T$. Hence,  

\begin{center}
 $ord_{\gamma^{s,T}}f<ord_{\gamma^{s,T_0}}f \Rightarrow \mu(Z_{\gamma^{s,T}} f)=T<T_0=\mu(Z_{\gamma^{s,T_0}} f)$.
\end{center}
Thus, since $\mu$ is affine in $Q_{f,Z}$, it follow that $\mu$ is a monotone increasing function.

\end{proof}

\noindent{\bf Acknowledgements}.  We would like to thank Andrei Gabrielov and Vincent Grandjean for their interest on this work, interesting questions and remarks.

\end{document}